\newcommand{\vt}{\mathbf{v}}
\newcommand{\phit}{\boldsymbol{\phi}}
\newcommand{\omegat}{\boldsymbol{\omega}}
\newcommand{\ft}{\mathbf{f}}
\newcommand{\wt}{\mathbf{w}}
\renewcommand{\div}{\operatorname{div}}
\newlength \figureheight
\newlength \figurewidth
\pgfplotsset{compat=1.9}
\newtheorem{lemma}{Lemma}
\newtheorem{algorithm}[lemma]{Algorithm}
\newtheorem{remark}[lemma]{Remark}
\newtheorem{problem}[lemma]{Problem}
\title{An averaging scheme for the efficient approximation of time-periodic
  flow problems}
\author{Thomas Richter
  \thanks{%
    University of Magdeburg, 
    Institute for Analysis and Numerics, 
    Universit\"atsplatz 2, 
    39104 Magdeburg, 
    Germany, 
    \texttt{thomas.richter@ovgu.de}}}
\begin{document}

\maketitle

\begin{abstract}
  We study periodic solutions to the Navier-Stokes equations. The
  transition phase of a dynamic Navier-Stokes solution to the
  periodic-in-time state can be excessively long and it depends on
  parameters like the domain size and the viscosity. 
  Several methods for an accelerated identification of the correct
  initial data that will yield the periodic state exist. They are
  mostly based on space-time frameworks for directly computing the
  periodic state or on optimization schemes or shooting methods for
  quickly finding the correct initial data that yields the periodic
  solution. They all have a large computational overhead in
  common. Here we describe and analyze a simple averaging scheme that
  comes at negligible additional 
  cost. We numerically demonstrate the efficiency and robustness of
  the scheme for several test-cases and we will theoretically show
  convergence for the linear Stokes problem.
\end{abstract}

\section{Introduction}

We study periodic solutions to the Navier-Stokes equations. Such
solutions appear in the laminar regime for the flow around an
obstacle but they can also be induced by a periodic forcing. Here
we investigate this second case which is easier as the frequency of
oscillation is known from the problem data. In the following we will
consider the incompressible Navier-Stokes equations on a domain
$\Omega\subset\mathds{R}^d$ ($d=2,3$)
\begin{equation}\label{NS}
  \div\,\vt = 0,\quad \partial_t\vt+(\vt\cdot\nabla)\vt - \nu \Delta
  \vt+\nabla p = \ft\text{ in }\mathds{R}_+\times \Omega,\quad
  \vt=\vt_0\text{ on }\{0\}\times \Omega,\quad
  \vt=0\text{ on }\mathds{R}_+\times \partial\Omega,
\end{equation}
where $\vt$ is the velocity, $p$ the pressure, $\nu>0$ the viscosity
and $\ft$ the right hand side, which we assume to be $P$-periodic in
time 
\begin{equation}\label{RHS:PER}
  \ft(t+P) = \ft(t),
\end{equation}
where $P>0$ is a fixed period. We are looking for periodic solutions
to~(\ref{NS}) that satisfy $\vt(t+P)=\vt(t)$ and $p(t+P)=p(t)$. In
general, two strategies for identifying such periodic solutions exist:
starting with arbitrary initial values $\vt_0$ one lets the system run
into the periodic state for $t\to\infty$, or, one tries to identify the
correct initial value $\vt_0$ that will directly give the periodic
state.

The necessity to compute such cyclic states of the Navier-Stokes
equations  arises e.g. in the context of temporal multi-scale schemes,
where oscillatory periodic short-scale solutions guide efficient
time-stepping schemes that govern the long-scale
dynamics~\cite{CrouchOskay2015,SandersVerhulstMurdock2007,FreiRichterWick2016,FreiRichter2019}.
Another application is found in 
optimization and steering processes like in simulated moving bed
processes in chemical
engineering~\cite{PlatteKuzminFredebeulTurek2005,LuebkeSeidelMorgensternTobiska2007,ZuyevSeidelMorgensternBenner2016}.

The transition phase of a dynamic Navier-Stokes solution to the
periodic state can be long and it depends, usually
exponentially, on the domain size, on problem
parameters like the viscosity or on discretization parameters like
temporal and spatial resolutions.

Several methods for an acceleration exist. One approach is based on 
a space-time framework for directly computing the periodic
state~\cite{PlatteKuzminFredebeulTurek2005,SteihUrban2012}. This
transforms the problem into a higher-dimensional one with 
substantial numerical overhead. Another possibility is to find
efficient ways for identifying the correct initial data $\vt_0$ for
the velocity $\vt(0)=\vt_0$ 
that gives  a periodic temporal solution of period $P>0$  with
$\vt(t+P)=\vt(t)$ for all $t\ge 0$. The classical approach is the
shooting method and it  has  been demonstrated for systems of ordinary
differential
equations~\cite{RooseLustChampneysSpence1995,JianBieglerFox2003}. Without
special  adaptions as discussed by these authors, the shooting
method 
requires the solution of a high-dimensional problem which comes at
significant costs if partial differential equations are
considered. Yet another approach casts the task of finding a periodic
solution into the identification  of the initial value
$\vt_0$ that solves the nonlinear problem  $F(\vt_0)=0$ where
$F(\vt_0):= \vt_{\vt_0}(P)-\vt_0$, where $\vt_{\vt_0}(t)$ is the
dynamic solution starting with $\vt(0)=\vt_0$. Applying Newton's
method to this problem 
yields a similar structure as the shooting method. The authors
of~\cite{PotschkaMommerSchloederBock2012,HanteMommerPotschka2015}
derived special preconditioners to avoid the large effort of
high-dimensional problems within the Newton scheme.
Nonlinear parabolic problems are analyzed in~\cite{Pao2001} and
monotone iterative schemes are presented for converging to periodic 
solutions. 
Finding the zero
of $F(\vt_0)=0$ can also be tackled as an optimization scheme for
minimizing $\|F(\vt_0)\|^2\to 0$. The authors
of~\cite{AmbroseWilkening2010,RichterWollner2018} apply different
optimization schemes to accelerate this problem. This approach
requires the solution of backward in time adjoint problems.
Finally, the authors of~\cite{LuebkeSeidelMorgensternTobiska2007}
propose an acceleration tool for the forward simulation 
based on a cascadic multilevel method. While this approach requires
the forward simulation only, it might still suffer from a long
transient phase.

Here, an accelerated scheme for the forward simulation is presented
that is based on updating the initial values using the solution of a
stationary  auxiliary
problem. For the Stokes equations we give a proof of the robust
convergence of the scheme with a rate that does not depend on problem
or discretization 
parameters. We numerically demonstrate the efficiency of this scheme
for the nonlinear  Navier-Stokes case.

In the following section we shortly introduce the Navier-Stokes
equations and the notation required for specifying cyclic
states. Section~\ref{sec:projection} presents the averaging algorithm
for accelerating convergence to such periodic states. We give a
complete analysis for the linear case of the Stokes equations and some
hints on treating the nonlinear Navier-Stokes equations. In
Section~\ref{sec:num} we present different numerical test cases
describing the robustness and efficiency of the suggested scheme. We
conclude in Section~\ref{sec:conclusion} with a short outlook to open
problems. 

\section{Periodic-in-time flow problems}\label{sec:projection}

For the following, let $\Omega\subset\mathds{R}^d$ be a domain ($d=2$
or $d=3$). By $H^s(\Omega)$, for $s\in\mathds{N}$ with $s\ge
1$ we denote the Sobolev space of $L^2$ functions on $\Omega$ with
$s$-th weak derivative in $L^2$. By $H^1_0(\Omega)$ we denote the space of
$H^1$-functions with trace zero on the boundary
$\partial\Omega$. By $\|\cdot\|$ we denote
the spatial $L^2$-norm. Further norms are specified by a corresponding
index. 
We start by discussing the linear Stokes equations
\begin{problem}[Stokes]\label{problem:stokes}
  Let $\Omega\subset\mathds{R}^d$ for $d=2,3$ be a domain with a
  boundary that is either smooth ($C^2$-parametrization) or polygonal
  with convex corners. Let $P>0$ be the fixed period and
  \[
  \ft\in  L^\infty(\mathds{R};H^{-1}(\Omega)^d) 
  \]
  with $\ft(t,\cdot)=\ft(t+P,\cdot)$.
  For $\vt_0\in L^2(\Omega)$ let
  \begin{equation}\label{ST}
  \div\,\vt=0,\quad \partial_t \vt-\nu \Delta \vt+\nabla p = \ft
  \text{ in }\mathds{R}_+\times \Omega,\quad
  \vt=\vt_0\text{ on }\{0\}\times \Omega,\quad
  \vt=0\text{ on }\mathds{R}_+\times \partial\Omega
  \end{equation}
  be the solution to the Stokes equations for $\nu>0$. 
\end{problem}
Standard existence and regularity results~\cite{Temam2000} give a
unique solution to Problem~\ref{problem:stokes} that satisfies
\begin{equation}\label{energy}
  \|\vt(t)\|^2 + \int_0^t\nu
  \|\nabla\vt(s)\|^2\,\text{d}s \le \|\vt_0\|^2 + \int_0^t
  \nu^{-1}\|\ft(s)\|_{H^{-1}}^2\,\text{d}s\quad\forall t\in
  \mathds{R}_+. 
\end{equation}
By Poincar\'e's inequality $\|\vt\|\le c_p \|\nabla
\vt\|$ we obtain an integral equation for $\|\vt(t)\|^2$
\begin{equation}\label{energy1}
\|\vt(t)\|^2+\int_0^t\frac{\nu}{c_p^2}\|\vt(s)\|^2\,\text{d}s
\le \|\vt_0\|^2 + \int_0^t\frac{1}{\nu} \|\ft(s)\|_{-1}^2\,\text{d}s,
\end{equation}
which shows that the solution is bounded for all $t\ge 0$ 
\[
\|\vt(t)\|^2 \le
\|\vt_0\|^2\exp\left(-\frac{\nu}{c_p^2}t\right)
+\frac{c_p^2}{\nu^2}\sup_{s\in [0,t]} \|\ft\|_{-1}^2.
\]
Next, let $\wt(t):=\vt(t+P)-\vt(t)$.  This function satisfies the
homogeneous version of equations~(\ref{energy}) and~(\ref{energy1}) such
that it holds 
\begin{equation}\label{perioddecay}
  \|\wt(t)\| \le \|\vt(P)-\vt(0)\|^2
  \exp\left(-\frac{\nu}{c_p^2}t\right)
  \xrightarrow[t\to\infty]{} 0.
\end{equation}
Hence, the solution $\vt(t)$ runs into a unique periodic-in-time state
for $t\to\infty$. Further~(\ref{perioddecay}) shows the
potentially slow decay to the periodic solution depending on the
viscosity $\nu$ and the Poincar\'e constant $c_p$.

These simply results do not carry over to the Navier-Stokes
equations. This is mainly due to the small-data assumption that is
required for the existence of global solutions and the uniqueness of
stationary solutions, which is also of relevance for identifying
periodic-in-time states.
\begin{problem}[Navier-Stokes]\label{problem:navierstokes}
  Let $\Omega\subset\mathds{R}^d$ for $d=2,3$ be a domain with a
  boundary that is either smooth ($C^2$-parametrization) or polygonal
  with convex corners. Let $P>0$ be the fixed period and
  \[
  \ft\in   L^\infty(\mathds{R};H^{-1}(\Omega)^d) 
  \]
  with $\ft(t,\cdot)=\ft(t+P,\cdot)$.
  For $\vt_0\in L^2(\Omega)$ let
  \begin{equation}\label{ST}
  \div\,\vt=0,\quad \partial_t \vt+(\vt\cdot\nabla)\vt-\nu \Delta
  \vt+\nabla p = \ft 
  \text{ in }\mathds{R}_+\times \Omega,\quad
  \vt=\vt_0\text{ on }\{0\}\times \Omega,\quad
  \vt=0\text{ on }\mathds{R}_+\times \partial\Omega
  \end{equation}
  be the solution to the Navier-Stokes equations for $\nu>0$. 
\end{problem}

It has been shown by Kyed and
Galdi~\cite{Kyed2012,GaldiKyed2016} that the solution runs into a
periodic-in-time state $(\vt^\pi,p^\pi)$ if the data is   
sufficiently small, i.e. if the right hand side $\ft$, the initial
value $\vt_0$, the non-homogenous Dirichlet data $\vt^D$ as well as the
period length $P$ are small and the viscosity $\nu$ is sufficiently
large. 

If the proper initial  $\vt_0^\pi$ is known, one cycle of the 
Stokes or Navier-Stokes equations on $[0,P]$ will directly give the
periodic solution. If  however the exact initial value is not given it
might take a tremendous number of cycles (of length $P$) to
sufficiently reduce the 
periodicity error $\|\vt^\pi(t)-\vt(t)\|$.
Here we describe an acceleration approach that is based on projecting
the approximated solution to one that already satisfies a correct
temporal average condition. While the analysis is rigorous for the
linear Stokes equations it remains an heuristic computational
acceleration scheme in the nonlinear case. In
contrast to the various approaches presented in the previous section,
our averaging scheme only requires the solution of one linear 
stationary problem in each cycle as computational overhead. 

\section{Averaging scheme for identifying periodic
  solutions}\label{sec:projection} 

A very slow decay to the periodic solution must be expected in the general
case. Convergence rates
\begin{equation}\label{decay}
\|\vt(t)-\vt^\pi(t)\|
\le C \exp\left(-\frac{\nu}{c_p^2}t\right) \|\vt_0-\vt^\pi_0\|
\end{equation}
are also numerically observed, both for the Stokes and the
Navier-Stokes equations, see Section~\ref{sec:num} for a
numerical illustration.

In this section we will derive an averaging scheme for 
accelerating the convergence to the periodic solution $\vt^\pi(t)$ for
arbitrary initial values $\vt_0$. This averaging scheme is based on
a splitting of the solution into its average and the
fluctuations. On the interval $I=[0,P]$ we introduce
\[
\vt(t):=\bar \vt+\tilde\vt(t),\quad
\bar\vt:=\frac{1}{P}\int_0^P \vt(s)\,\text{d}s,\quad
\tilde\vt(t):=\vt(t)-\bar\vt. 
\]
To speedup the process of finding the periodic solution we try to
quickly adapt the average $\bar\vt$. Averaging the Navier-Stokes
equation, Problem~\ref{problem:navierstokes}, gives
\begin{equation}\label{separate}
\begin{aligned}
  \frac{\vt(P)-\vt(0)}{P}
  +(\bar\vt\cdot\nabla)\bar\vt
  +\frac{1}{P}\int_0^P
  (\tilde\vt(s)\cdot\nabla)\tilde\vt(s)\,\text{d}s 
  -\nu \Delta\bar\vt
  +\nabla\bar p &= \bar \ft,\quad\div\,\bar\vt=0\\
  \partial_t \tilde\vt(t)
  +(\tilde\vt(t)\cdot\nabla)\bar\vt
  + (\bar\vt\cdot\nabla)\tilde\vt(t)
  -\nu \Delta\tilde\vt(t)
  +\nabla\tilde p(t) &= \tilde \ft(t),\quad \div\,\tilde\vt(t)=0,
\end{aligned}
\end{equation}
and reveals that average and fluctuation are coupled and cannot be
computed separately. Assuming that $\vt$ is the periodic solution
$\vt^\pi$ with $\vt^\pi(t)=\vt^\pi(t+P)$ it holds
\[
\begin{aligned}
  (\bar\vt^\pi\cdot\nabla)\bar\vt^\pi +
  \frac{1}{P}\int_0^P(\tilde\vt^\pi(s)\cdot\nabla)\tilde\vt^\pi(s)\,\text{d}s 
  -\nu \Delta\bar\vt^\pi
  +\nabla\bar p^\pi &= \bar \ft,\quad\div\,\bar\vt^\pi=0\\
  \partial_t \tilde\vt^\pi(t)
  +(\tilde\vt^\pi(t)\cdot\nabla)\bar\vt^\pi
  + (\bar\vt^\pi\cdot\nabla)\tilde\vt^\pi(t)
  -\nu \Delta\tilde\vt^\pi(t)
  +\nabla\tilde p^\pi(t) &= \tilde \ft(t),\quad \div\,\tilde\vt^\pi(t)=0,
\end{aligned}
\]
In the averaging scheme we aim at correcting the initial value such
that the correct average
$\tilde\vt^\pi$  is well approximated. For this, we introduce a
problem for the correction $(\bar\wt,\bar q):=(\bar\vt^\pi-\bar\vt,\bar
p^\pi-\bar p)$, i.e. for the difference between periodic solution and
current approximation
\begin{multline}\label{eq:update}
  (\bar\wt\cdot\nabla)\bar\wt
  +(\bar\wt\cdot\nabla)\bar\vt
  +(\bar\vt\cdot\nabla)\bar\wt
  -\nu \Delta\bar\wt + \nabla\bar q\\
  =\frac{\vt(P)-\vt(0)}{P}+
  \frac{1}{P}\int_0^P\Big( (\tilde\vt(s)\cdot\nabla)\tilde\vt(s)
  - (\tilde\vt^\pi(s)\cdot\nabla)\tilde\vt^\pi(s)\Big)\,\text{d}s,
  \quad \div\,\bar\wt=0.
\end{multline}
Naturally, the fluctuation term on the right hand side cannot be
computed without knowledge of the periodic fluctuations
$\tilde\vt^\pi(s)$. However since $\tilde\vt$ shall eventually
converge to $\tilde\vt^\pi$, and since adequate initial values will be
required anyway, we simplify the equation by dropping this term. 
Likewise, we will drop the quadratic term
$(\bar\wt\cdot\nabla)\bar\wt$ since this is second order in $\bar\wt$,
which will converge to zero.

With these preparations we can formulate the approximated averaging
scheme for finding periodic-in-time solutions to the Navier-Stokes
equations
\begin{algorithm}[Averaging Scheme for the Navier-Stokes equations]
  \label{algo:navierstokes}
  Let $\vt_0^{(0)}\in L^2(\Omega)^d$ be an initial value. For 
  $l=1,2,\dots$ iterate
  \begin{enumerate}
  \item Solve the Navier-Stokes equation on $[0,P]$
    \[
    \partial_t \vt^{(l)}(t) - (\vt^{(l)}\cdot\nabla)\vt^{(l)}
    -\nu \Delta\vt^{(l)} +\nabla p^{(l)} = \ft,\quad
    \div\,\vt^{(l)}=0,\quad \vt^{(l)}(0) = \vt_0^{(l-1)}. 
    \]
  \item Compute the average in time
    \[
    \bar \vt^{(l)}:=\frac{1}{P}\int_0^P \vt^{(l)}(s)\,\text{d}s
    \]
  \item Compute the approximated stationary update problem
    \[
    (\bar\wt^{(l)}\cdot\nabla)\bar\vt^{(l)}
    +(\bar\vt^{(l)}\cdot\nabla)\bar\wt^{(l)}
    -\nu\Delta\bar\wt^{(l)} + \nabla \bar q^{(l)}
    =\frac{\vt^{(l)}(P)-\vt^{(l)}(0)}{P},\quad
    \div\,\bar\wt^{(l)}=0.
    \]
  \item Update the initial value
    \[
    \vt^{(l)}_0:= \vt^{(l)}(P) + \bar\wt^{(l)}.
    \]
  \end{enumerate}
\end{algorithm}
The main effort of this scheme is still in the computation of the
dynamic problems in Step 1. We will however observe an significant
reduction of cycles required to approximate the periodic solution.


\subsection{Analysis for the Stokes equations}

The application of the averaging scheme to the Stokes equations
significantly simplified the setting. First, this is due to a
separation of average and fluctuations in
equation~(\ref{separate}). This allows to exactly compute the update
problem~(\ref{eq:update}) without further simplifications. Step 3 of
Algorithm~\ref{algo:navierstokes} can be replaced by solving 
\begin{equation}\label{eq:update:stokes}
  -\nu\Delta\bar\wt^{(l)} + \nabla\bar q^{(l)} =
  \frac{\vt^{(l)}(P)-\vt^{(l)}(0)}{P},\quad 
  \div\,\bar\wt^{(l)}  =0. 
\end{equation}
Second, the symmetric Stokes operator allows for a very simple
analysis based on diagonalization. To be precise, under the
assumptions of Problems~\ref{problem:stokes} and
Problem~\ref{problem:navierstokes} there exists an orthonormal basis
of weakly divergence free 
eigenfunctions $\omegat_i\in H^1_0(\Omega)$ and corresponding
eigenvalues $\lambda_i\in\mathds{R}$ for  $i=1,\dots,\infty$ such that 
(see~\cite{Temam2000}) 
\begin{equation}\label{eigenvalue:stokes}
  \begin{aligned}
    (\nu \nabla \omegat_i,\nabla\phit)-(\zeta_i,\div\,\phit) +
    (\div\,\omegat_i,\xi) &= \lambda_i(\omegat_i,\phit)\quad\forall
    \phit\in H^1_0(\Omega),\; \xi\in
    L^2(\Omega)\setminus\mathds{R}\\
    \text{with }(\omegat_i,\omegat_j)_\Omega = \delta_{ij}&
    \text{ and }
    0<\lambda_1 <\lambda_2\le \lambda_3\le \cdots
  \end{aligned}
\end{equation}
for all $i,j\in\mathds{N}$. 
Given $\vt(t)$, we denote the 
expansion in the eigenfunctions as
\[
\vt(t) = \sum_{i\ge 1} v_i(t)\omegat_i,\quad
\vt_i(t) = (\vt(t),\omegat_i)_\Omega.
\]
A corresponding notation is used for further functions like the
right hand side $\ft(t)$, the periodic solution $\vt^\pi(t)$, the
update $\wt(t)$ or the initial data $\vt_0$. 
Hereby the Stokes equation can be diagonalized and a simple decoupled
system of ode's results
\begin{equation}\label{ODE}
  \partial_t v_i(t) - \nu \lambda_i v_i(t) = f_i(t),\quad i=1,2,\dots.
\end{equation}
Let value $v_i(0)=v_{i,0}\in\mathds{R}$ be the coefficients of the
initial values, the solution
to~(\ref{ODE}) is given by
\begin{equation}\label{SOL}
  v_i(t) = \exp\big(-\lambda_i\nu t) v_i(0) + \int_0^t f_i(s)\exp\big(
  -\lambda_i\nu(t-s)\big)\,\text{d}s. 
\end{equation}

\begin{lemma}[Averaging Scheme for the Stokes
    equation]\label{lemma:ode}
  Let $\Omega\in\mathds{R}^d$ for $d=2$ or $d=3$ be a domain with
  either convex polygonal or smooth ($C^2$-parametrization) boundary,
  $\ft\in L^\infty([0,P],H^{-1}(\Omega)^d)$ and $\vt_0\in
  L^2(\Omega)$. The
  averaging scheme applied to the Stokes equations converges like
  \[
  \|\vt^{(l)}_0 - \vt^\pi_0\|\le 0.3\cdot \|\vt^{(l-1)}_0-\vt^\pi_0\|, 
  \]
  where $\vt^\pi_0$ is the initial value that yields the
  periodic-in-time solution. 
\end{lemma}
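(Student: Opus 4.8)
\emph{Proof plan.} The idea is to diagonalise the whole scheme: because the Stokes update~(\ref{eq:update:stokes}) carries no nonlinear coupling, each eigen-coefficient evolves on its own and the steps of Algorithm~\ref{algo:navierstokes} collapse to one scalar linear recursion per mode. First I would fix $i$ and write, using~(\ref{SOL}), the cycle started from $v_{i,0}^{(l-1)}$ as
\[
  v_i^{(l)}(t) = e^{-\nu\lambda_i t} v_{i,0}^{(l-1)} + \int_0^t f_i(s)\,e^{-\nu\lambda_i(t-s)}\,\text{d}s .
\]
Imposing $v_i^\pi(P)=v_i^\pi(0)$ in the same formula identifies the periodic coefficient $v_i^\pi(0)=(1-e^{-\nu\lambda_i P})^{-1}\int_0^P f_i(s)\,e^{-\nu\lambda_i(P-s)}\,\text{d}s$, whence the transient part is purely exponential, $v_i^{(l)}(t)-v_i^\pi(t)=e^{-\nu\lambda_i t} e_i^{(l-1)}$ with $e_i^{(l-1)}:=v_{i,0}^{(l-1)}-v_i^\pi(0)$, and in particular $v_i^{(l)}(P)-v_i^{(l)}(0)=-(1-e^{-\nu\lambda_i P})e_i^{(l-1)}$.

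Next I would combine this with the Stokes update~(\ref{eq:update:stokes}) and the initial-value correction of Step~4. Since~(\ref{eq:update:stokes}) diagonalises to $\nu\lambda_i\bar w_i^{(l)}=\big(v_i^{(l)}(P)-v_i^{(l)}(0)\big)/P$, plugging it into $v_{i,0}^{(l)}=v_i^{(l)}(P)+\bar w_i^{(l)}$ and subtracting $v_i^\pi(0)$ yields
\[
  e_i^{(l)} = g(x_i)\,e_i^{(l-1)},\qquad g(x):=e^{-x}-\frac{1-e^{-x}}{x},\qquad x_i:=\nu\lambda_i P>0 .
\]
Thus the scheme is a diagonal operator with symbol $g$, and since $\{\omegat_i\}$ is orthonormal, Parseval gives $\|\vt_0^{(l)}-\vt_0^\pi\|^2=\sum_i |g(x_i)|^2|e_i^{(l-1)}|^2\le\big(\sup_{x>0}|g(x)|^2\big)\,\|\vt_0^{(l-1)}-\vt_0^\pi\|^2$, so it only remains to bound $\sup_{x>0}|g(x)|$.

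For the scalar estimate I would argue: $g(x)<0$ for every $x>0$ (equivalently $(x+1)e^{-x}<1$), so $g(x)<0.3$ is immediate; for the lower bound, $g$ vanishes both as $x\to0^+$ and as $x\to\infty$, and $g'(x)=x^{-2}\big(1-e^{-x}(1+x+x^2)\big)$ changes sign exactly once on $(0,\infty)$ — the equation $e^x=1+x+x^2$ has a unique positive root $x^\ast$, because $\varphi(x):=e^x-1-x-x^2$ satisfies $\varphi(0)=\varphi'(0)=0$ and $\varphi''$ changes sign once, so $\varphi$ has a single sign change — hence $g$ attains its global minimum at $x^\ast$. An explicit enclosure $x^\ast\in(1.79,1.80)$ then gives $|g(x^\ast)|<0.2985<0.3$, so $|g(x_i)|\le 0.3$ for all $i$ and the claim follows.

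The step I expect to be the only real obstacle is this last one: although everything in it is elementary, $g$ is non-monotone and its global extremum sits at a transcendental point where $|g|$ is only marginally below the asserted constant, so the bound must be obtained carefully (uniqueness of the critical point, an honest numerical enclosure of $x^\ast$, plus e.g. a crude second-order estimate to pin down $g(x^\ast)$) rather than by inspection.
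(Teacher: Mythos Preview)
Your proposal is correct and follows essentially the same route as the paper: diagonalise via the Stokes eigenfunctions, track the error mode by mode, and reduce everything to bounding the scalar amplification factor, which is the same function---your $g(x)=e^{-x}-(1-e^{-x})/x$ is identically the paper's $\rho(s)=e^{-s}(1+1/s)-1/s$. The only difference is cosmetic: the paper simply asserts that $\rho$ has a single extremum found numerically with $|\rho|<0.299$, whereas you supply the extra justification (uniqueness of the root of $e^x=1+x+x^2$ and the enclosure $x^\ast\in(1.79,1.80)$), which is a welcome elaboration rather than a different idea.
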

\begin{proof}
  Let $v_{i,0}$ be the coefficients of the initial value $\vt_0$, by
  $v_i(t)$ we denote the coefficient functions of the dynamic solution
  on $I=[0,P]$, by $v_i^\pi(t)$ the coefficient functions of the
  unknown periodic-in-time solution. We analyze one iteration of
  Algorithm~\ref{algo:navierstokes} applied to the Stokes
  equations. Hence, $l=1$ and we will drop this index. For the error 
  $w_i(t):=v_i^\pi(t)-v_i(t)$ equation~(\ref{SOL}) gives
  \begin{equation}\label{S1}
    v_i^\pi(t)-v_i(t) = \exp\big(-\lambda_i\nu t\big) (v_{i,0}^\pi-v_{i,0})
  \end{equation}
  Step 2 can be omitted since the average does not enter the update
  equation in the linear case, compare~(\ref{eq:update:stokes}). 
  The solution to the stationary update equation of Step 3
  is given by 
  \begin{equation}\label{S2}
    \lambda_i \nu \bar w_i = \frac{v_i(P)-v_i(0)}{P}
    \quad\Rightarrow\quad \bar w_i =
    \frac{(v_i^\pi(0)-v_i(0))-(v_i^\pi(P)-v_i(P))}{\lambda_i\nu P},
  \end{equation}
  where we used that $v_i^\pi(0)=v_i^\pi(P)$. With~(\ref{S1}) this
  gives
  \begin{equation}\label{S3}
    \bar w_i =\frac{1}{\lambda_i\nu P}\Big( 1- \exp\big(-\lambda_i\nu P\big)\Big) 
    \big(v_i^\pi(0)-v_i(0)\big)
  \end{equation}
  Then, the newly computed initial value from Step 4 in the algorithm
  satisfies
  \begin{equation}\label{S4}
    v_i^\pi- \big(v_i(P) + \bar w_i\big)
    = \big(v_i^\pi(P)-v_i(P)\big) - \bar w_i
    = \Big(\exp\big(-\lambda_i \nu P\big)
    \left(1+\frac{1}{\lambda_i\nu P}\right)
    -\frac{1}{\lambda_i\nu P}\Big)
    \big(v_i^\pi(0)-v_i(0)\big)
  \end{equation}
  where we used~(\ref{S3}) and~(\ref{S1}). Let $s=\lambda_i\nu
  P>0$. We study the function
  \begin{equation}\label{S5}
    \rho(s) = \exp(-s)\left(1+\frac{1}{s}\right)-\frac{1}{s}
    =\frac{\exp(-s)}{s}\big(1+s-\exp(s)\big),
  \end{equation}
  which is negative for $s\in\mathds{R}_+$ since $\exp(s)\ge
  1+s$. Further it holds $\rho(0)=0$ and $\rho(s)\to 0$ for
  $s\to\infty$. $\rho(s)$ has only one extreme point
  in  $\mathds{R}_+$ that is easily found numerically and gives the bound
  $|\rho(x)|<0.299$ for $x\in\mathds{R}_+$. 
\end{proof}
The convergence rate of this averaging scheme is always at least $0.3$
and does in particular not depend on the stiffness rate
$\lambda_i\nu>0$ or the period length $P>0$. 

\section{Discrete setting}\label{disc}

For discretization of the dynamic Navier-Stokes and Stokes equations we
employ standard techniques which we summarize briefly. Details are
found in~\cite[Sections 4.1 and
  4.2]{Richter2017}. All implementations are done in Gascoigne
3D~\cite{Gascoigne3D}. In time we use 
the $\theta$-time-stepping scheme that can be considered as a variant of
the Crank-Nicolson
scheme~\cite{LuskinRannacher1982,HeywoodRannacher1990} with better
smoothing properties capable of giving robust long time solutions. Let
$k=P/N$ be the time step size and $t_n = nk$ for $n=0,1,\dots$ be the
uniform partitioning of $[0,P]$.
By $\vt_n\approx \vt(t_n)$ and $p_n\approx p(t_n)$ we denote the  
approximation to the solution at time $t_n$. Likewise
$\ft^n:=\ft(t_n)$ is the right hand side evaluated at time $t_n$. 

Let $V_h\times Q_h\subset
H^1_0(\Omega;\Gamma^D)^d \times L^2(\Omega)$ be a suitable 
finite element pair. For simplicity assume that $V_h\times Q_h$ is an
inf-sup stable 
pressure-velocity pair like the Taylor-Hood
element~\cite{Brezzi1991}. Alternatively, 
stabilized equal order finite elements, e.g. based on the local
projection stabilization scheme~\cite{BeckerBraack2001}, can be
used. This is indeed the standard setting of our implementation in
Gascoigne 3D~\cite{Gascoigne3D}. The 
complete space-time discrete formulation of the incompressible
Navier-Stokes equation is given by
\begin{multline}\label{fd}
  \vt^n\in V_h,\quad p^n\in V_h,\quad \vt^0:=\vt_0,\quad
  \text{for }n=1,2,\ldots\\ 
  \Big(\vt^n-\vt^{n-1},\phi\Big) + k\Big(
  (1-\theta)(\vt^{n-1}\cdot\nabla)\vt^{n-1} + \theta
  (\vt^n\cdot\nabla)\vt^n,\phi\Big)\\
  +   \nu k\left( \nabla \left(
  (1-\theta)\vt^{n-1} + \theta\vt^n\right),\nabla\phi\right)
  -k\Big(p^n,\nabla\cdot\phi\Big)\\
  +k \Big(\nabla\cdot \vt^n,\xi\Big) = k\Big(
  (1-\theta)\ft^{n-1}+\theta\ft^n,\phi\Big)\quad\forall (\phi,\xi)\in
  V_h\times Q_h.
\end{multline}
The  parameter $\theta$ is chosen in $[1/2,1]$.
For $\theta=1$ this
scheme corresponds to the backward Euler method, for $\theta=1/2$ to
the Crank-Nicolson scheme and for $\theta>1/2$ to the shifted
Crank-Nicolson scheme which has better smoothing properties. For
$\theta=1/2+{\mathcal O}(k)$ we get global stability and still have
second order convergence,
see~\cite{LuskinRannacher1982,Rannacher1984,RichterWick2015_time}. In
addition to~(\ref{fd}) we also consider the discretization of the Stokes
equation, which is realized by skipping the convective terms. 

To transfer the averaging scheme to the discrete setting we start by
indicating a discrete counterpart of the averaging operator that is
conforming with the 
discretization based on the $\theta$-scheme. We sum~(\ref{fd}) over
all time steps using the same pair of test functions $(\phi,\xi)$ for all
steps and divide by the period $P$
\begin{equation}\label{summed}
\frac{1}{P}\Big(\vt^N-\vt^{0},\phi\Big) + \Big(
\overline{(\vt\cdot\nabla)\vt}^k ,\phi\Big)
+   \nu \left( \nabla \overline{\vt}^k,\nabla\phi\right) 
-\Big(\overline{p}^{k,0},\nabla\cdot\phi\Big)
+ \Big(\nabla\cdot \overline{\vt}^{k},\xi\Big) = \Big(
  \overline{\ft}^k,\phi\Big)
\end{equation}
with the discrete averaging operators
\begin{equation}\label{avgdic}
  \overline{\vt}^k:=\frac{k}{P}\sum_{n=1}^N
  \Big((1-\theta)\vt_{n-1} + \theta\vt_n\Big),\quad
  \overline{p}^{k,0}:=\frac{k}{P}\sum_{n=1}^{N} p_n. 
\end{equation}
Directly summing the divergence term in~(\ref{fd}) yields the average
$(\nabla\cdot \overline{\vt}^{k,0},\xi)=0$, this however is equivalent
to $(\nabla\cdot \overline{\vt}^{k,0},\xi)=(\nabla\cdot
\overline{\vt}^{k},\xi)$. The discrete periodic solution
$(\vt^\pi,p^\pi)$ satisfies the equation
\begin{equation}\label{summed:avg}
  \Big(
  \overline{(\vt^\pi\cdot\nabla)\vt^\pi}^k ,\phi\Big)
  +   \nu \left( \nabla \overline{\vt^\pi}^k,\nabla\phi\right) 
-\Big(\overline{p^\pi}^{k,0},\nabla\cdot\phi\Big)
+ \Big(\nabla\cdot \overline{\vt^\pi}^{k},\xi\Big) = \Big(
  \overline{\ft}^k,\phi\Big),
\end{equation}
such that the approximated discrete update equation, i.e. the equation
for approximating the difference
$(\overline{\wt}^k,\overline{q}^{k,0}) \approx
(\overline{\vt^\pi}^{k}-\overline{\vt}^k,\overline{p^\pi}^{k,0} -
\overline{p}^{k,0})$ is given by 
\begin{equation}\label{summed:up}
  \Big(
  (\overline{\wt}^k\cdot\nabla)\overline{\vt}^k
  +(\overline{\vt}^k\cdot\nabla)\overline{\wt}^k ,\phi\Big)
  + \nu \left( \nabla \overline{\wt}^k,\nabla\phi\right) 
  -\Big(\overline{q}^{k,0},\nabla\cdot\phi\Big)
  + \Big(\nabla\cdot \overline{\wt}^{k},\xi\Big) =
  \frac{1}{P}\Big(\vt_N-\vt_0,\phi\big),
\end{equation}
where the same approximations are applied as in the continuous case:
we neglect the averaging error $\overline{(\wt\cdot\nabla)\wt}^k-
(\overline{\wt}^k\cdot\nabla)\overline{\wt}^k$ as well as the quadratic
nonlinearity  $(\overline{\wt}^k\cdot\nabla)\overline{\wt}^k$.

\begin{algorithm}[Discrete averaging Scheme for the Navier-Stokes
    equations] \label{algo:disc}
  Let $\vt_0^{(0)}\in V_h$ be an initial value. For
  $l=1,2,\dots$ iterate
  \begin{enumerate}
  \item Solve the Navier-Stokes~(\ref{fd}) equation on $[0,P]$ for
    $(\vt^{(l)}_n,p^{(l)}_n)$, $n=1,\dots,N$ with
    $\vt^{(l)}_0=\vt_0^{(l-1)}$. 
  \item Compute $\overline{\vt^{(l)}}^k$, the discrete average in
    time, by~(\ref{summed:avg}). 
  \item Compute the approximated (stationary) update
    problem~(\ref{summed:up}) for
    $\overline{\wt^{(l)}}^k,\overline{q^{(l)}}^{k,0}$. 
  \item Update the initial value
    \[
    \vt^{(l)}_0:= \vt^{(l)}_N + \overline{\wt^{(l)}}^k.
    \]
  \end{enumerate}
\end{algorithm}

Since we consider conforming finite element discretizations the proof
for the linear case can be transferred to the discrete setting. Some
care is required to obtain sufficient stability of the time stepping
scheme.
\begin{lemma}[Discrete averaging Scheme for the Stokes
    equation]\label{lemma:ode:disc}
  Let $\Omega\in\mathds{R}^d$ for $d=2$ or $d=3$ be a domain with
  either convex polygonal or smooth ($C^2$-parametrization) boundary,
  $\ft\in 
  L^\infty([0,P],H^{-1}(\Omega)^d)$ and $\vt_0\in L^2(\Omega)$. Let
  $V_h\times Q_h\subset H^1_0(\Omega)^d\times L^2(\Omega)$ be an
  inf-sup stable finite element space. Let $I=[0,P]$ be discretized in
  $N\ge 4$ time steps of size $k=P/N$ with time-scheme parameter
  \[
  \theta_N = \frac{1}{2}+\frac{1}{2N}.
  \]
  Then, the
  discrete averaging Algorithm~\ref{algo:disc} applied to the Stokes
  equations converges like 
  \[
  \|\vt^{(l)}_0 - \vt^\pi_0\|\le 0.42\cdot \|\vt^{(l-1)}_0-\vt^\pi_0\|.
  \]
\end{lemma}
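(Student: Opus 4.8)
The plan is to mirror the diagonalization argument from Lemma~\ref{lemma:ode}, but replace the continuous solution operator by the amplification factor of the $\theta$-scheme applied to each scalar ODE mode. First I would diagonalize the discrete Stokes system: since $V_h\times Q_h$ is inf-sup stable and conforming, the discrete Stokes operator is symmetric positive definite on the discretely divergence-free subspace, so it admits an $L^2$-orthonormal basis of discrete eigenfunctions $\omegat_i^h$ with eigenvalues $0<\mu_1\le\mu_2\le\cdots$. Expanding $\vt_n=\sum_i v_{i,n}\omegat_i^h$, equation~(\ref{fd}) without the convective terms decouples into scalar recursions
\[
v_{i,n} - v_{i,n-1} + \nu k\mu_i\big((1-\theta)v_{i,n-1}+\theta v_{i,n}\big) = k\big((1-\theta)f_{i,n-1}+\theta f_{i,n}\big),
\]
whose homogeneous amplification factor is $\gamma_i := \dfrac{1-(1-\theta)\nu k\mu_i}{1+\theta\nu k\mu_i}$. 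So after $N$ steps the homogeneous part contributes $\gamma_i^N$; note $\theta_N=\tfrac12+\tfrac1{2N}>\tfrac12$ is exactly the choice that keeps $|\gamma_i|<1$ (indeed bounded away from $-1$) for all $s:=\nu k\mu_i>0$, which is the stability point flagged after the statement.

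Next I would write out one step of Algorithm~\ref{algo:disc} in these coordinates, exactly paralleling equations~(\ref{S1})--(\ref{S4}). Let $w_{i,n}:=v^\pi_{i,n}-v_{i,n}$; by linearity $w_{i,n}=\gamma_i^{n}(v^\pi_{i,0}-v_{i,0})$. The discrete average $\overline{v_i}^k=\frac{k}{P}\sum_{n=1}^N\big((1-\theta)v_{i,n-1}+\theta v_{i,n}\big)$ is again irrelevant for the linear update; the discrete update equation~(\ref{summed:up}) reads $\nu\mu_i\,\overline{w_i}^k=\frac{v_{i,N}-v_{i,0}}{P}$, and using $v^\pi_{i,0}=v^\pi_{i,N}$ together with $w_{i,N}=\gamma_i^N w_{i,0}$ gives
\[
\overline{w_i}^k = \frac{w_{i,0}-w_{i,N}}{\nu\mu_i P} = \frac{1-\gamma_i^N}{\nu\mu_i P}\,(v^\pi_{i,0}-v_{i,0}).
\]
Then the new error after Step~4, $v^\pi_{i,0}-(v_{i,N}+\overline{w_i}^k)$, equals $\rho_N(s)\,(v^\pi_{i,0}-v_{i,0})$ with, after substituting $\nu\mu_i P = Ns/N\cdot\ldots$ — more precisely writing $\nu k\mu_i = s$ so $\nu\mu_i P = Ns$ —
\[
\rho_N(s) = \gamma_i^{N} - \frac{1-\gamma_i^{N}}{Ns} = \gamma^N\Big(1+\frac1{Ns}\Big) - \frac1{Ns},\qquad \gamma=\gamma(s)=\frac{1-(1-\theta_N)s}{1+\theta_N s}.
\]
Since the eigenmodes are $L^2$-orthonormal, $\|\vt^{(l)}_0-\vt^\pi_0\|^2=\sum_i\rho_N(s_i)^2(v^\pi_{i,0}-v_{i,0})^2$, so it suffices to bound $\sup_{s>0,\,N\ge4}|\rho_N(s)|\le 0.42$.

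The main obstacle is this last uniform bound, which is the discrete analogue of the elementary estimate $|\rho(s)|<0.299$ in the continuous case but is genuinely more delicate because $\rho_N$ depends on the extra parameter $N$ (equivalently on $\theta_N$). I would proceed as follows: for fixed $N$, $\rho_N$ is a smooth function of $s$ on $(0,\infty)$ with $\rho_N(0^+)=0$ (since $\gamma(0)=1$ and $\gamma^N(1+\tfrac1{Ns})-\tfrac1{Ns}\to 0$, needing a short Taylor expansion $\gamma(s)=1-s+O(s^2)$) and $\rho_N(s)\to 0$ as $s\to\infty$ because $|\gamma(s)|^N\to\big((1-\theta_N)/\theta_N\big)^N$, a quantity that I would show stays bounded and that $\gamma^N/(Ns)\to0$. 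On the bounded middle range one controls $\rho_N$ by noting $\gamma(s)\in(-1,1)$ so $\gamma^N$ is small once $s$ is not tiny, while for tiny $s$ one uses $\gamma^N\approx e^{-Ns}=e^{-\nu\mu_i P}$, recovering the continuous $\rho$; the difference $\rho_N(s)-\rho(Ns)$ is then estimated uniformly. Alternatively — and this is probably cleaner for the write-up — one simply checks the two worst cases $N=4$ and $N\to\infty$ (monotonicity of the relevant quantities in $N$, or a crude interval argument), verifies numerically that $\max_s|\rho_N(s)|$ increases from $0.299$ at $N=\infty$ to below $0.42$ at $N=4$, and records this as the stated constant. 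The remaining routine points are: confirming the discrete Stokes eigenbasis exists and is $L^2$-orthonormal under inf-sup stability; confirming $\theta_N\ge\tfrac12$ gives $\gamma(s)>-1$ so no mode is amplified; and confirming that the pressure average $\overline{p}^{k,0}$ and the identity $(\nabla\cdot\overline{\vt}^{k,0},\xi)=(\nabla\cdot\overline{\vt}^{k},\xi)$ noted before the algorithm make the discrete update~(\ref{summed:up}) the exact Galerkin projection of the modewise equation, so that no cross-mode coupling is introduced.
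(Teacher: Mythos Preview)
Your reduction to the modewise scalar problem is exactly what the paper does: diagonalize via the discrete Stokes eigenbasis, identify the amplification factor $\gamma$ of the $\theta$-scheme, and arrive at the reduction factor $\rho_N$. (The paper parametrizes by $S:=\nu\mu_i P=Ns$ rather than your $s=\nu k\mu_i$, but the formulas are equivalent.) The gap lies entirely in the final step, the uniform bound $|\rho_N|\le 0.42$.

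First, a concrete error: your claim that $\rho_N(s)\to 0$ as $s\to\infty$ is false. Since $\gamma(s)\to -(1-\theta_N)/\theta_N$ and $1/(Ns)\to 0$, one has $\rho_N(\infty)=\bigl(-(1-\theta_N)/\theta_N\bigr)^N$; for $N=4$, $\theta_4=5/8$ this is $(3/5)^4\approx 0.13$, not zero. Hence the continuous-case strategy ``$\rho$ vanishes at both ends, so bound it by its single interior extremum'' does not carry over. The large-$s$ tail must be handled explicitly, and this is precisely where the specific value $\theta_N=\tfrac12+\tfrac1{2N}$ is used \emph{quantitatively}, not merely to keep $|\gamma|<1$.

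Second, both of your proposed routes for the bound are incomplete. The comparison $\gamma^N\approx e^{-Ns}$ for small $s$ is the right idea, but ``the difference is then estimated uniformly'' is where the actual work sits. The paper makes this precise (in its variable $S=Ns$) via the elementary sandwich
\[
1-\frac{S}{N}\;\le\;1-\frac{S}{N+\theta S}\;\le\;1-\frac{S}{N(1+\theta)},\qquad 0\le S\le N,
\]
which after raising to the $N$th power and using $(1-x/N)^N\le e^{-x}$ traps $\rho^N(S)$ between the continuous $\rho(S)\ge -0.3$ and an upper bound whose supremum is $\theta/(1+\theta)\le 0.39$ for $\theta=\theta_N$, $N\ge 4$. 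Your alternative route---check $N=4$ numerically and invoke monotonicity in $N$---is not a proof without establishing that monotonicity, and the paper does not attempt it. For the complementary range $S\ge N$ the paper observes that $q(S)=1-\tfrac{S}{N+\theta S}$ is monotone with $|q(S)|\le \tfrac{1}{\theta_N}-1=\tfrac{N-1}{N+1}$, hence
\[
|\gamma^N|\le\Bigl(\frac{N-1}{N+1}\Bigr)^N\le e^{-2},
\]
and combines this with $1/S\le 1/N\le 1/4$ to obtain $|\rho^N(S)|\le e^{-2}(1+\tfrac14)+\tfrac14<0.42$. The case split at $S=N$ and the identity $\tfrac{1}{\theta_N}-1=\tfrac{N-1}{N+1}$ leading to the $e^{-2}$ bound are the missing ingredients in your sketch.
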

\begin{proof}
  Let $(\omegat_i,\mu_i,\lambda_i)\in V_h\times Q_h\times \mathds{R}$
  for $i=1,\dots,N_h:=\operatorname{div}(V_h)$ be a system of  
  $L^2$-orthonormal, discretely divergence free eigenfunctions and
  eigenvalues of the discrete Stokes operator. By
  $v_{i,0}, f_i\in \mathds{R}$ and $v_i(t)\in\mathds{R}$ we denote the
  coefficient (function) of an initial value $\vt_0$ of the right hand
  side and of the solution $\vt(t)$ with respect to this
  basis. Comparable to the continuous case we analyze one single step
  of the discrete averaging scheme given in
  Algorithm~\ref{algo:disc}.

  \medskip\noindent \emph{(i)}
  The discretized Stokes equation can be decoupled into a system of
  $N_h$ difference equations
  \[
  v^n_i-v^{n-1}_i + \nu k \lambda_i \big( (1-\theta)v^{n-1}_i + \theta
  v^n_i\big) =
  \nu k  \big( (1-\theta)f^{n-1}_i + \theta
  f^n_i\big),\quad i=1,\dots,N_h. 
  \]
  We measure the difference $v^{\pi,n}_i-v_i^n$ between the 
  solution to the correct initial value $v^\pi_{0,i}$ and to an arbitrary starting
  value $v_{0,i}$ 
  \begin{equation}\label{decay:disc}
    v^{\pi,n}_i-v^n_i=\underbrace{\frac{1-\nu k
        (1-\theta)\lambda_i}{1+\nu k \theta 
        \lambda_i}}_{=:q_i}  w^{n-1}_i = q_i^n (v^\pi_{0,i}-v_{0,i}). 
  \end{equation}
  The coefficients of the stationary update equation in Step 3 of
  Algorithm~\ref{algo:disc} are determined by
  \[
  \overline{w_i}^k =\frac{v_i^N-v_i^0}{\nu\lambda_i P}
  \]
  and with $v^{\pi,N}_i=v^{\pi,0}_i=v^\pi_{0,i}$ the new initial
  computed in Step 4 of Algorithm~\ref{algo:disc} is
  carrying the error
  \[
  v^\pi_{0,i}-\left(v^N_i + \overline{w_i}^k\right)
  =v^{\pi,N}_i - v^N_i + \frac{v_i^{\pi,N}-v_i^N
    -(v_i^{\pi,0}-v_i^0)}{\nu\lambda_i P},
  \]
  which, together with~(\ref{decay:disc}), is estimated as
  \[
  v^\pi_{0,i}-\left(v^N_i + \overline{w_i}^k\right)
  =\left(q_i^N\left(1+\frac{1}{\nu\lambda_i
    P}\right)-\frac{1}{\nu\lambda_i P}\right) 
  (v^\pi_{0,i}-v_{0,i}). 
  \]
  With $s_i:=\nu\lambda_i P\in [s_1,s_{|V_h|}]\subset (0,\infty)$ we
  identify the reduction factor
  \begin{equation}\label{disc:1}
    \rho^N(s):=\left(1-\frac{s}{N+\theta
      s}\right)^N\left(1+\frac{1}{s}\right)
    -\frac{1}{s},
  \end{equation}
  which for $N\to \infty$ converges to the reduction rate $\rho(s)$
  of the continuous case, see~(\ref{S5}).

  \medskip\noindent
  \emph{(ii)} For estimating~(\ref{disc:1}) we consider two
  cases. First, let $0\le s\le N$. Then, it holds
  \begin{equation}\label{disc:2}
    0\le 1-\frac{s}{N}\le 
    1-\frac{s}{N+\theta s}\le 1-\frac{s}{N(1+\theta)}
  \end{equation}
  and hereby, we can estimate $\rho^N(s)$ to both sides by
  \[
  \begin{aligned}
  \left(1-\frac{s}{N}\right)^N\left(1+\frac{1}{s}\right)
  -\frac{1}{s} &\le 
  \rho^N(s) \le
  \left(1-\frac{s}{N(1+\theta)}\right)^N\left(1+\frac{1}{s}\right)
  -\frac{1}{s}\\
  \Leftrightarrow\qquad
  \exp\left(-s\right)\left(1+\frac{1}{s}\right)
  -\frac{1}{s} &\le 
  \rho^N(s) \le
  \exp\left(-\frac{s}{1+\theta}\right)\left(1+\frac{1}{s}\right)
  -\frac{1}{s}.
  \end{aligned}
  \]
  The lower bound is $\rho(s)$, see Eq.~(\ref{S5}), with $-0.29<\rho(s)$
  for all $s\ge 0$. The upper bound takes its maximum at $s\to 0$ 
  \[
  \exp\left(-\frac{s}{1+\theta}\right)\left(1+\frac{1}{s}\right)
  -\frac{1}{s}
  \xrightarrow[s\to 0]{} \frac{\theta}{1+\theta}
  \]
  such that it holds
  \[
  -0.3 \le \rho^N(s) \le \frac{\theta}{1+\theta},\quad \forall 0\le
  s\le N. 
  \]
  For $\theta=\theta_N$ and $N\ge 4$ this gives the bound
  \[
  |\rho^N\big|_{\theta=\theta_N}| \le 0.39\quad \forall 0\le s\le N. 
  \]
  Next, let $s\ge N$. The function
  \[
  q(s):=1-\frac{s}{N+\theta s}
  \]
  is monotonically decreasing, such that for $s\ge N$ it holds
  \[
  1-\frac{1}{\theta}\le q(s)= 1-\frac{s}{N+\theta s} \le
  1-\frac{1}{1+\theta}
  \quad\Rightarrow\quad |q(s)| \le \frac{1}{\theta}-1.
  \]
  Choosing $\theta=\theta_N$ and $N\ge 2$ gives
  \[
  |q(s)^N| \le \left|\frac{N-1}{N+1}\right|^N=
  \left|1-\frac{2}{N+1}\right|^N\le \exp(-2).
  \]
  Altogether we estimate for $s\ge N$ with $N\ge 4$ we get the bound
  \[
  |\rho^N(s)\Big|_{\theta=\theta_N}| \le
  \exp(-2)\left(1+\frac{1}{N}\right) + \frac{1}{N}
  \le 1.25 \exp(-2)+0.25   \le 0.42.
  \]
\end{proof}

\begin{remark}
  Numerical results show that this estimate is not sharp, convergence
  rates close to $0.3$ are observed (which is the reduction rate in
  the continuous setting). The specific choice of
  \[
  \theta_N = \frac{1}{2}+\frac{1}{2N}
  \]
  corresponds to a slightly shifted version of the Crank-Nicolson
  scheme. It is of second order and has improved stability
  properties. The choice of $\theta_N$ corresponds to  $\theta
  = \frac{1}{2}+\frac{P}{2}k$ and it can be generalized to
  $\theta=\frac{1}{2}+\alpha k$ for $\alpha>0$. 
  In the numerical test cases we observe no problems with
  the standard Crank-Nicolson scheme $\theta=1/2$. 
\end{remark}

\section{Numerical test cases}\label{sec:num}

We discuss different numerical test cases to highlight the
efficiency and robustness of the averaging scheme for the computation
of cyclic states. We directly consider the Navier-Stokes equations but
include problems at very low Reynolds numbers. The linear Stokes
problem gives comparable results. Here,  perfect robustness of the
averaging scheme for arbitrary variations of the viscosity, the domain
size, the velocity, etc. if found. 

Before presenting the specific test cases we shortly describe the
computational setting implemented in the finite element software
library \emph{Gascoigne 3D}~\cite{Gascoigne3D}: Discretization (in 2d)
is based on quadrilateral meshes. To cope with the saddle-point
structure we utilize stabilized quadratic equal-order
elements. Stabilization is based on the local projection
scheme~\cite{BeckerBraack2001}. As the appearing Reynolds numbers are
very moderate we do not require any stabilization of convective terms.
Nonlinear problems are approximated with a Newton scheme using
analytic Jacobians. The equal-order setup allows us to use an
efficient geometric multigrid solver for all linear problems,
see~\cite{BeckerBraack2000a} for the general setup
and~\cite{KimmritzRichter2010} for the efficient implementation.
Although the analysis shows superior robustness for a shifted version
of the Crank-Nicolson scheme with $\theta>1/2$ we do not observe any
difficulties with the choice $\theta=1/2$ which will be used
throughout this section. In~\cite{RichterMizerski2020} an application
of the averaging scheme to the efficient simulation of temporal
multiscale problems is given. Here we also include a study on a three
dimensional test case. These results are in perfect agreement to the
following two dimensional cases. 

\subsection{Robustness of the averaging scheme}\label{sec:num:stokes}

For $L\in\mathds{R}_+$ let $\Omega = 
(-L,L)^2$ and $I=[0,P]$ for a given $P\in\mathds{R}_+$. We solve
\[
\begin{aligned}
  \nabla\cdot\vt =0,\quad   \partial_t \vt+(\vt\cdot\nabla)\vt -\nu \Delta \vt + \nabla p &=
  \ft&& \text{ in }I\times \Omega\\ 
  \vt=\vt_0\text{ on } \{0\}\times \Omega,\quad \vt&=0&&\text{ on } 
  I\times \partial\Omega,
\end{aligned}
\]
and try to identify a time-periodic solution $\vt(P)=\vt(0)$. The
forcing $\ft$ is $P$-periodic and given by
\[
\ft(x,y,t) =
\frac{\tanh\big(y\big)}{LP}
\sin\left(\frac{2\pi 
  t}{P}\right)\begin{pmatrix}1\\ 0 
\end{pmatrix}.
\]

\begin{figure}[t]
  \setlength{\figureheight}{0.3\textwidth}
  \setlength{\figurewidth}{0.4\textwidth} 
  
  \begin{center}
    \begin{tabular}{cc}
      \parbox{0.48\textwidth}{
      \begin{tikzpicture}
  \begin{semilogyaxis}[
      width=\figurewidth, height=\figureheight,
      scale only axis,
      x label style={anchor=north, below=-12mm},
      xlabel=cycles,
      mark options={solid},
      title style={font=\bfseries},
      legend style={at={(\figurewidth,0.95)},xshift=-0.2cm,anchor=north east,nodes=right}]
    \addplot[color=blue,solid,line width=0.5mm] table[row sep=crcr]{
      2	0.0497606	\\
      3	0.0293591	\\
      4	0.00358413	\\
      5	0.00100257	\\
      6	0.000283001	\\
      7	8.00906e-05	\\
      8	2.28839e-05	\\
      9	6.56557e-06	\\
      10	1.88392e-06	\\
      11	5.41209e-07	\\
      12	1.55537e-07	\\
      13	4.46971e-08	\\
      14	1.28441e-08	\\
      15	3.69068e-09	\\
    };
    \addlegendentry{$L=1$}
    \addplot[solid,line width=0.5mm] table[row sep=crcr]{
      2	0.040718	\\
      3	0.0396963	\\
      4	0.0024437	\\
      5	0.000657732	\\
      6	0.000184053	\\
      7	5.26298e-05	\\
      8	1.4954e-05	\\
      9	4.21247e-06	\\
      10 1.18558e-06	\\
      11 3.29752e-07	\\
      12 9.18535e-08	\\
      13 2.53591e-08	\\
      14 7.00737e-09	\\
    };
    \addlegendentry{$L=2$}
    \addplot[solid,color=red,line width=0.5mm] table[row sep=crcr]{
      2	0.0276215	\\
      3	0.0228166	\\
      4	0.00477496	\\
      5	0.00111102	\\
      6	0.000161514	\\
      7	4.17968e-05	\\
      8	1.1146e-05	\\
      9	3.03335e-06	\\
      10	8.33905e-07	\\
      11	2.33751e-07	\\
      12	6.5562e-08	\\
      13	1.83917e-08	\\
      14	5.16003e-09	\\
    };
    \addlegendentry{$L=4$}
    \addplot[mark=*,color=blue,line width=0.2mm,mark size=0.5mm] table[row sep=crcr]{
      2	0.0497606	\\
      3	0.0110588	\\
      4	0.00299763	\\
      5	0.000810536	\\
      6	0.000219065	\\
      7	5.92054e-05	\\
      8	1.6001e-05	\\
      9	4.32449e-06	\\
      10	1.16875e-06	\\
      11	3.15871e-07	\\
      12	8.53682e-08	\\
      13	2.30719e-08	\\
      14	6.23548e-09	\\
    };
    \addlegendentry{$L=1$}
    \addplot[mark=+,mark size=0.5mm,line width=0.2mm] table[row sep=crcr]{
2	0.040718	\\
3	0.0124829	\\
4	0.00731993	\\
5	0.00496711	\\
6	0.0035285	\\
7	0.00253523	\\
8	0.00182434	\\
9	0.00131688	\\
10	0.000950305	\\
11	0.000685452	\\
12	0.000494303	\\
13	0.000356425	\\
14	0.000256996	\\
15	0.000185298	\\
16	0.000133599	\\
17	9.63241e-05	\\
18	6.94486e-05	\\
19	5.00716e-05	\\
20	3.61009e-05	\\
21	2.60283e-05	\\
22	1.8766e-05	\\
23	1.353e-05	\\
24	9.75496e-06	\\
25	7.03319e-06	\\
26	5.07083e-06	\\
27	3.656e-06	\\
28	2.63592e-06	\\
29	1.90046e-06	\\
30	1.37021e-06	\\
31	9.87899e-07	\\
32	7.12261e-07	\\
33	5.1353e-07	\\
34	3.70248e-07	\\
35	2.66943e-07	\\
36	1.92462e-07	\\
37	1.38763e-07	\\
38	1.00046e-07	\\
39	7.21316e-08	\\
40	5.20059e-08	\\
41	3.74955e-08	\\
42	2.70337e-08	\\
43	1.94909e-08	\\
44	1.40527e-08	\\
45	1.01318e-08	\\
46	7.30487e-09	\\
    };
    \addlegendentry{$L=2$}
    \addplot[mark=diamond,mark size=0.5mm,line width=0.2mm,color=red] table[row sep=crcr]{
      2	0.0276215	\\
3	0.00870959	\\
4	0.00505705	\\
5	0.00362721	\\
6	0.00276177	\\
7	0.00227139	\\
8	0.00189834	\\
9	0.00162561	\\
10	0.00142849	\\
11	0.00126236	\\
12	0.00112151	\\
13	0.00101213	\\
14	0.000918037	\\
15	0.00083425	\\
16	0.000759451	\\
17	0.000692535	\\
18	0.000636588	\\
19	0.000585304	\\
20	0.000538287	\\
21	0.000495167	\\
22	0.000455602	\\
23	0.000419283	\\
24	0.000385928	\\
25	0.000355282	\\
26	0.000327115	\\
27	0.000301215	\\
28	0.00027749	\\
29	0.000255762	\\
30	0.000235725	\\
31	0.00021725	\\
32	0.000200217	\\
33	0.000184515	\\
34	0.000170041	\\
35	0.000156699	\\
36	0.000144403	\\
37	0.00013307	\\
38	0.000122625	\\
39	0.000112999	\\
40	0.000104129	\\
41	9.59537e-05	\\
42	8.84203e-05	\\
43	8.14787e-05	\\
44	7.50836e-05	\\
45	6.919e-05	\\
46	6.37586e-05	\\
47	5.87533e-05	\\
48	5.41407e-05	\\
49	4.989e-05	\\
50	4.5973e-05	\\
    };
    \addlegendentry{$L=4$}
  \end{semilogyaxis}
\end{tikzpicture}}
      &
      \parbox{0.48\textwidth}{
      \begin{tikzpicture}
  \begin{semilogyaxis}[
      width=\figurewidth, height=\figureheight,
      scale only axis,
      x label style={anchor=north, below=-12mm},
      xlabel=cycles,
      mark options={solid},
      title style={font=\bfseries},
      legend style={at={(\figurewidth,0.95)},xshift=-0.2cm,anchor=north east,nodes=right}]
    \addplot[color=blue,solid,line width=0.5mm] table[row sep=crcr]{
    2	0.0403474	\\
3	0.00512544	\\
4	0.000550745	\\
5	7.3483e-05	\\
6	9.52856e-06	\\
7	1.25059e-06	\\
8	1.63562e-07	\\
9	2.1416e-08	\\
10	2.80313e-09	\\
    };
    \addlegendentry{$P=4$}
    \addplot[solid,line width=0.5mm] table[row sep=crcr]{
    2	0.0403596	 \\
3	0.0159469	 \\
4	0.000616497	 \\
5	9.7356e-05	 \\
6	1.19473e-05	 \\
7	2.03077e-06	 \\
8	2.80606e-07	\\
9	4.51702e-08	\\
10	6.63728e-09	\\
    };
    \addlegendentry{$P=2$}
    \addplot[solid,color=red,line width=0.5mm] table[row sep=crcr]{
    2	0.040718	\\
3	0.0396963	\\
4	0.0024437	\\
5	0.000657732	\\
6	0.000184053	\\
7	5.26298e-05	\\
8	1.4954e-05	\\
9	4.21247e-06	\\
10	1.18558e-06	\\
11	3.29752e-07	\\
12	9.18535e-08	\\
13	2.53591e-08	\\
14	7.00737e-09	\\
    };
    \addlegendentry{$P=1$}
    \addplot[mark=*,color=blue,line width=0.2mm,mark size=0.5mm] table[row sep=crcr]{
    2	0.0403474	\\
3	0.00872483	\\
4	0.00236091	\\
5	0.000638271	\\
6	0.000172501	\\
7	4.66196e-05	\\
8	1.25992e-05	\\
9	3.40501e-06	\\
10	9.20224e-07	\\
11	2.48696e-07	\\
12	6.72115e-08	\\
13	1.81643e-08	\\
14	4.90901e-09	\\
    };
    \addlegendentry{$P=4$}
    \addplot[mark=+,mark size=0.5mm,line width=0.2mm] table[row sep=crcr]{
2	0.0403596	 \\
3	0.0120208	 \\
4	0.00598796	 \\
5	0.00310265	 \\
6	0.00161608	 \\
7	0.000840482	 \\
8	0.000436977	 \\
9	0.000227165	\\
10	0.000118089	 \\
11	6.13871e-05	 \\
12	3.19111e-05	 \\
13	1.65885e-05	 \\
14	8.62328e-06	 \\
15	4.48268e-06	 \\
16	2.33025e-06	 \\
17	1.21134e-06	 \\
18	6.29699e-07	 \\
19	3.27339e-07	 \\
20	1.70162e-07	\\
21	8.84562e-08	\\
22	4.59826e-08	\\
23	2.39033e-08	\\
24	1.24258e-08	\\
25	6.45934e-09	\\
    };
    \addlegendentry{$P=2$}
    \addplot[mark=diamond,mark size=0.5mm,line width=0.2mm,color=red] table[row sep=crcr]{
2	0.040718	\\
3	0.0124829	\\
4	0.00731993	\\
5	0.00496711	\\
6	0.0035285	\\
7	0.00253523	\\
8	0.00182434	\\
9	0.00131688	\\
10	0.000950305	\\
11	0.000685452	\\
12	0.000494303	\\
13	0.000356425	\\
14	0.000256996	\\
15	0.000185298	\\
16	0.000133599	\\
17	9.63241e-05	\\
18	6.94486e-05	\\
19	5.00716e-05	\\
20	3.61009e-05	\\
21	2.60283e-05	\\
22	1.8766e-05	\\
23	1.353e-05	\\
24	9.75496e-06	\\
25	7.03319e-06	\\
26	5.07083e-06	\\
27	3.656e-06	\\
28	2.63592e-06	\\
29	1.90046e-06	\\
30	1.37021e-06	\\
31	9.87899e-07	\\
32	7.12261e-07	\\
33	5.1353e-07	\\
34	3.70248e-07	\\
35	2.66943e-07	\\
36	1.92462e-07	\\
37	1.38763e-07	\\
38	1.00046e-07	\\
39	7.21316e-08	\\
40	5.20059e-08	\\
41	3.74955e-08	\\
42	2.70337e-08	\\
43	1.94909e-08	\\
44	1.40527e-08	\\
45	1.01318e-08	\\
46	7.30487e-09	\\
    };
    \addlegendentry{$P=1$}
  \end{semilogyaxis}
\end{tikzpicture}}
      \\
      \parbox{0.48\textwidth}{
      \begin{tikzpicture}
  \begin{semilogyaxis}[
      width=\figurewidth, height=\figureheight,
      scale only axis,
      x label style={anchor=north, below=-12mm},
      xlabel=cycles,
      mark options={solid},
      title style={font=\bfseries},
      legend style={at={(\figurewidth,0.95)},xshift=-0.2cm,anchor=north east,nodes=right}]
    \addplot[color=blue,solid,line width=0.5mm] table[row sep=crcr]{
1	0.0403662	
2	0.0192276	\\
3	0.00290778	\\
4	0.000810289	\\
5	0.000228534	\\
6	6.4629e-05	\\
7	1.83877e-05	\\
8	5.27484e-06	\\
9	1.51367e-06	\\
10	4.34493e-07	\\
11	1.24758e-07	\\
12	3.58313e-08	\\
13	1.02932e-08	\\
14	2.95828e-09	\\
};
    \addlegendentry{$\nu=0.4$}
    \addplot[solid,line width=0.5mm] table[row sep=crcr]{
    1	0.0407366	\\
2	0.039687	\\
3	0.00244442	\\
4	0.000658136	\\
5	0.000184287	\\
6	5.2731e-05	\\
7	1.49925e-05	\\
8	4.22616e-06	\\
9	1.19021e-06	\\
10	3.31274e-07	\\
11	9.23423e-08	\\
12	2.55124e-08	\\
13	7.05465e-09	\\
    };
    \addlegendentry{$\nu=0.1$}
    \addplot[solid,color=red,line width=0.5mm] table[row sep=crcr]{
    1	0.0391445	
2	0.0424874	\\
3	0.00351014	\\
4	0.00492496	\\
5	0.000425011	\\
6	0.000155808	\\
7	3.34941e-05	\\
8	1.17075e-05	\\
9	2.97373e-06	\\
10	9.19202e-07	\\
11	2.08771e-07	\\
12	6.20127e-08	\\
13	1.72378e-08	\\
14	6.08189e-09	\\
    };
    \addlegendentry{$\nu=0.025$}
    \addplot[mark=*,color=blue,line width=0.2mm,mark size=0.5mm] table[row sep=crcr]{
2       	0.0403662	\\
3       	0.00872641	\\
4       	0.00236024	\\
5       	0.000637834	\\
6       	0.000172317	\\
7       	4.65524e-05	\\
8       	1.25764e-05	\\
9       	3.39757e-06	\\
10	9.17871e-07	\\
11	2.47967e-07	\\
12	6.69896e-08	\\
13	1.80976e-08	\\
14	4.88915e-09	\\
};
    \addlegendentry{$\nu=0.4$}
    \addplot[mark=+,mark size=0.5mm,line width=0.2mm] table[row sep=crcr]{
2       	0.0407366	\\
3       	0.0124839	\\
4       	0.00732031	\\
5       	0.00496733	\\
6       	0.00352864	\\
7       	0.00253527	\\
8       	0.00182431	\\
9       	0.00131684	\\
10	0.000950246	\\
11	0.00068539	\\
12	0.000494245	\\
13	0.000356374	\\
14	0.000256953	\\
15	0.000185261	\\
16	0.000133569	\\
17	9.62999e-05	\\
18	6.94293e-05	\\
19	5.00563e-05	\\
20	3.6089e-05	\\
21	2.60189e-05	\\
22	1.87588e-05	\\
23	1.35245e-05	\\
24	9.75068e-06	\\
25	7.02992e-06	\\
26	5.06833e-06	\\
27	3.6541e-06	\\
28	2.63448e-06	\\
29	1.89937e-06	\\
30	1.36938e-06	\\
31	9.8728e-07	\\
32	7.11796e-07	\\
33	5.13181e-07	\\
34	3.69986e-07	\\
35	2.66748e-07	\\
36	1.92316e-07	\\
37	1.38654e-07	\\
38	9.99645e-08	\\
39	7.20711e-08	\\
40	5.19608e-08	\\
41	3.7462e-08	\\
42	2.70089e-08	\\
43	1.94725e-08	\\
44	1.4039e-08	\\
45	1.01217e-08	\\
46	7.29737e-09	\\
    };
    \addlegendentry{$\nu=0.1$}
    \addplot[mark=diamond,mark size=0.5mm,line width=0.2mm,color=red] table[row sep=crcr]{
2       	0.0391445	\\
3       	0.0124471	\\
4       	0.00735842	\\
5       	0.00530438	\\
6       	0.00410882	\\
7       	0.0033978	\\
8       	0.00285528	\\
9       	0.00247513	\\
10	0.00218369	\\
11	0.00193666	\\
12	0.00172894	\\
13	0.00156918	\\
14	0.00142644	\\
15	0.00129879	\\
16	0.00118438	\\
17	0.00108546	\\
18	0.000998977	\\
19	0.000919474	\\
20	0.000846397	\\
21	0.000779223	\\
22	0.000717465	\\
23	0.000660673	\\
24	0.000608435	\\
25	0.000560376	\\
26	0.000516152	\\
27	0.000475448	\\
28	0.000438172	\\
29	0.000403993	\\
30	0.000372506	\\
31	0.000343439	\\
32	0.000316613	\\
33	0.000291862	\\
34	0.000269029	\\
35	0.00024797	\\
36	0.00022855	\\
37	0.000210643	\\
38	0.000194134	\\
39	0.000178914	\\
40	0.000164884	\\
41	0.000151952	\\
42	0.000140032	\\
43	0.000129045	\\
44	0.000118919	\\
45	0.000109587	\\
46	0.000100987	\\
47	9.30606e-05	\\
48	8.57562e-05	\\
49	7.90247e-05	\\
50	7.28214e-05	\\
    };
    \addlegendentry{$\nu=0.025$}
  \end{semilogyaxis}
\end{tikzpicture}}
      &
      \parbox{0.4\textwidth}{
        \textbf{bold solid lines:} averaging scheme\\
        \textbf{lines with marks:} forward simulation\\

        Number of cycles $n$ required to reduce the periodicity error
        to 
        \[
        \|\vt\big(nP\big)-\vt\big((n-1)P\big)\|<10^{-8}
        \]
        or 50 cycles exceeded. 

        Parameters that are not varies are set to $L=2$, $\nu=0.1$, $P=1$.
      }
    \end{tabular}
  \end{center}
  \caption{Robustness of the forward simulation and the averaging
    scheme with respect to different parameters:
    the domain 
    size $L$ (top/left), the period $P$ (top/right), the viscosity 
    $T$ (bottom/left). No effects of variations in the mesh size $h$
    or time step size $k=P/N$ are observed.}
  \label{fig:stokes}
\end{figure}
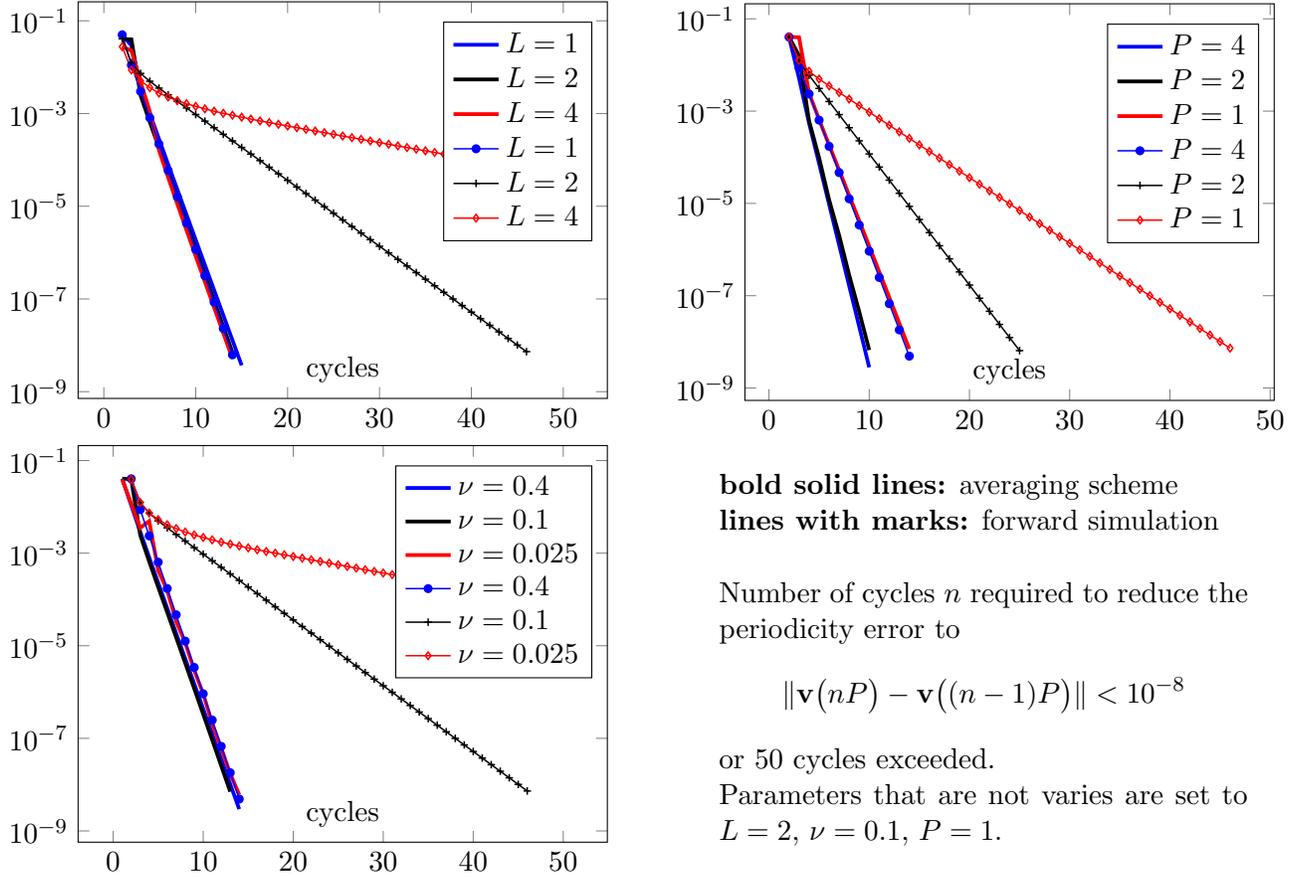

\begin{table}[t]
\begin{center}
  \begin{tabular}{cc|cc|cc}
    \toprule
    \multicolumn{6}{l}{Forward Simulation}\\ \midrule
    $L$ & $\sigma$ &$P$ & $\sigma$ &  $\nu$& $\sigma$ \\
    \midrule
    1&  0.27 &1 & 0.72 & 0.1 & 0.72 \\
    2& 0.72 &2 & 0.52 & 0.05 & 0.85  \\
    4& 0.92 &4 & 0.27 & 0.025& 0.92 \\
    \bottomrule
  \end{tabular} \hspace{1cm}
  \begin{tabular}{cc|cc|cc}
    \toprule
    \multicolumn{6}{l}{Averaging Scheme}\\\midrule
    $L$ & $\sigma$ &$P$ & $\sigma$ & $\nu$& $\sigma$ \\
    \midrule
    1& 0.29 & 1 & 0.28 & 0.1 &0.28  \\
    2& 0.28 & 2 & 0.15 & 0.05& 0.29 \\
    4& 0.28 & 4 & 0.13 & 0.025 &0.29  \\
  \bottomrule
  \end{tabular}
\end{center}
\caption{Convergence rate $\sigma = \|\vt^{(l)}_0-\vt^{(l-1)}_0\|/
  \|\vt^{(l-1)}_0-\vt^{(l-2)}_0\|$. Left: forward simulation. Right:
  averaging scheme. Variation of the domain size $L$,
  the period $P$ and the viscosity $\nu$. }
\label{tab:1}
\end{table}

We start by analyzing the dependency of the two approaches
``forward simulation'' \textbf{(F)} and  ``averaging scheme''
\textbf{(A)} on the domain size $L$, which enters the Poincar\'e
constant and the smallest eigenvalue, the viscosity $\nu$ that directly
influences the reduction rate~(\ref{S5}), further the period length
$P$. The results are shown in Fig.~\ref{fig:stokes}.
We also tested the robustness versus changes in the temporal step
size $k=P/N$ and the mesh size $h$. There was however no influence of
these parameters on the iteration counts, neither in the case of the
forward simulation nor for the averaging scheme. 

Enlarging the domain has a dramatic effect on the forward simulation
\textbf{(F)} as it is shown in the upper/left plot of
Figure~\ref{fig:stokes}. The rate of convergence strongly changes. For
$L=4$ or $L=8$ the required tolerance of $10^{-8}$ could no be
reached within the allowed 50 cycles. In contrast, the averaging
scheme~\textbf{(A)} is very robust. We see a slight
dependency of its convergence rate on the parameter $P$.

In Table~\ref{tab:1} we indicate the convergence rates of the forward
simulation and the averaging scheme in dependency of the parameters,
i.e. the experimental reduction rate $\sigma$ defined by
\[
\sigma^{(l)}:=\frac{\|\vt^{(l)}_0-\vt^{(l-1)}_0\|}{
\|\vt^{(l-1)}_0-\vt^{(l-2)}_0\|}.
\]
The rate of the averaging scheme is very robust. All
test cases show $\sigma<0.29$, which is the limit shown in the
continuous case of Lemma~\ref{lemma:ode}.  In contrast, the simple
forward iteration shows deteriorating convergence rates for smaller
viscosities and period lengths and larger domains and behaves like
\[
\sigma = {\cal O}\left(\exp\left(-\frac{\nu P}{L^2}\right)\right)
\]
which is expected from the theoretical analysis,
see~(\ref{perioddecay}).

The results for the averaging scheme suggest that the analysis in the
discrete case in Lemma~\ref{lemma:ode:disc} is not sharp. 
The bound $0.29$ for the convergence rate identified in the continuous
case in Lemma~\ref{lemma:ode} also appears to be valid in the discrete
setting.

\subsection{Robustness with regard to the Reynolds number}\label{sec:num:ns}

As second test case we study the Navier-Stokes flow in an annulus with
outer radius $R=5$ and inner radius $r=0.5$, see
Fig.~\ref{fig:ns:1}. On both rings we drive the flow by a Dirichlet
condition. While the outer ring is rotating, an inflow/outflow 
condition with an oscillating direction is prescribed on the inner
ring. The maximum velocity on the boundary reaches
$|\vt|=0.5$. The period is chosen as $P=1$ and we choose $N=20$ time
steps for each cycle. Biquadratic finite elements on an isoparametric
mesh with a total of  $N_{dofs} = 49\,920$
degrees of freedom are used. Like in the previous test case we could
not identify any dependency of the convergence rates on the temporal
step size $k=1/N$ or the number of spatial unknowns $N_{dofs}$. 

In Figure~\ref{fig:ns:2} we show the required number of steps for
different viscosities $\nu$ in order to investigate the influence of the
nonlinearity. With $|\vt|=1/2$, $diam(\Omega)=10$ we compute the Reynolds number
as
\[
Re = \frac{|\vt|R}{\nu} = \frac{5}{\nu}. 
\]
Variation of the viscosity from $\nu=1$ to $\nu\approx 0.0078$
corresponds to a range of Reynolds numbers from $Re=5$ to
$Re=640$. For smaller viscosities we could not identify any periodic
solution. In the case of very large viscosity parameters $\nu=1$
there is no benefit of the averaging scheme \textbf{(A)}. 

\begin{figure}[t]
  \begin{center}
    \begin{minipage}{0.26\textwidth}
      \includegraphics[width=\textwidth]{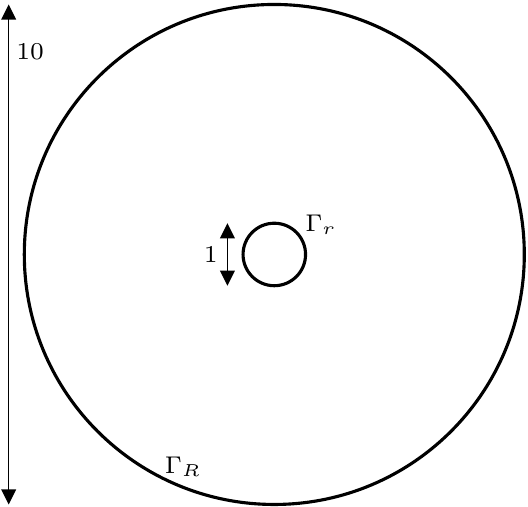}
    \end{minipage}
    \begin{minipage}{0.56\textwidth}\small
      \[
      \begin{aligned}
        \vt_r &= \frac{1}{2} \begin{pmatrix}
          \cos(2\pi t/P)\\
          \sin(2\pi t/P)
        \end{pmatrix}\\ \\
        \vt_R &= \frac{sin(2\pi t/P)}{2 R} \begin{pmatrix}
          -y\\ x
        \end{pmatrix}\\ \\
        r&=\frac{1}{2},\quad R=5,\quad P=1\\
      \end{aligned}
      \]
    \end{minipage}
  \end{center}
  \caption{Configuration of the second Navier-Stokes test case.
    The flow is driven by periodic Dirichlet conditions on both
    boundaries, the inner circle $\Gamma_r$ with radius $r=0.5$ and
    the outer circle $\Gamma_R$ with radius $R=5$. }
  \label{fig:ns:1}
\end{figure}

Increasing the Reynolds number causes a significant increase of
required iterations in the case of the forward simulation \textbf{(F)}
while the necessary iterations for the averaging scheme \textbf{(A)}
stays constant until we leave the regime of periodic solutions. In
between the savings are substantial.

\begin{figure}[t]
  \begin{center}
    \begin{minipage}{0.6\textwidth}
      \setlength{\figureheight}{0.5\textwidth}
      \setlength{\figurewidth}{0.8\textwidth} 
      \begin{tikzpicture}
  \begin{semilogxaxis}[
      width=\figurewidth, height=\figureheight,
      scale only axis,
      x label style={anchor=north, below=-12mm},
      xlabel={Reynolds number},
      ylabel=cycles,
      mark options={solid},
      title style={font=\bfseries},
      ymin=-30,
      legend style={at={(0,0.95)},xshift=0.2cm,anchor=north west,nodes=right}]
    \addplot[color=black,mark=*,solid,line width=0.5mm] table[row sep=crcr]{
      5 22  \\
      10  21 \\
      20 27  \\
      40 43  \\
      80  73 \\
      160 122\\
      320 198\\
      640 295\\
    };
    \addlegendentry{forward scheme \textbf{(F)}}
    \addplot[color=black,mark=triangle,solid,line width=0.5mm] table[row sep=crcr]{
      5 22\\
      10  20\\
      20 16\\
      40 11\\
      80  8\\
      160 8\\
      320 10\\
      640 15\\
    };
    \addlegendentry{averaging scheme \textbf{(A)}}
  \end{semilogxaxis}
\end{tikzpicture}
    \end{minipage}\hspace{0.05\textwidth}
    \begin{minipage}{0.29\textwidth}\small
      ~\vspace{0.4cm}
      \begin{tabular}{cccc}
        \toprule
        $\nu$ & $Re$ & \textbf{(F)} & \textbf{(A)}\\
        \midrule
        $2^{-0}$&5&22 &22\\
        $2^{-1}$&10&21 &20\\
        $2^{-2}$&20&27 &16\\
        $2^{-3}$&40&43 &11\\
        $2^{-4}$&80&73 &8\\
        $2^{-5}$&160&122&8\\
        $2^{-6}$&320&198&10\\
        $2^{-7}$&640&295&15\\
        \bottomrule
      \end{tabular}
    \end{minipage}

  \end{center}
  \caption{Required number of cycles for the averaging scheme and the
    forward iteration to reach the tolerance error 
    $10^{-8}$ depending on the Reynolds number $Re$. }
  \label{fig:ns:2}
\end{figure}
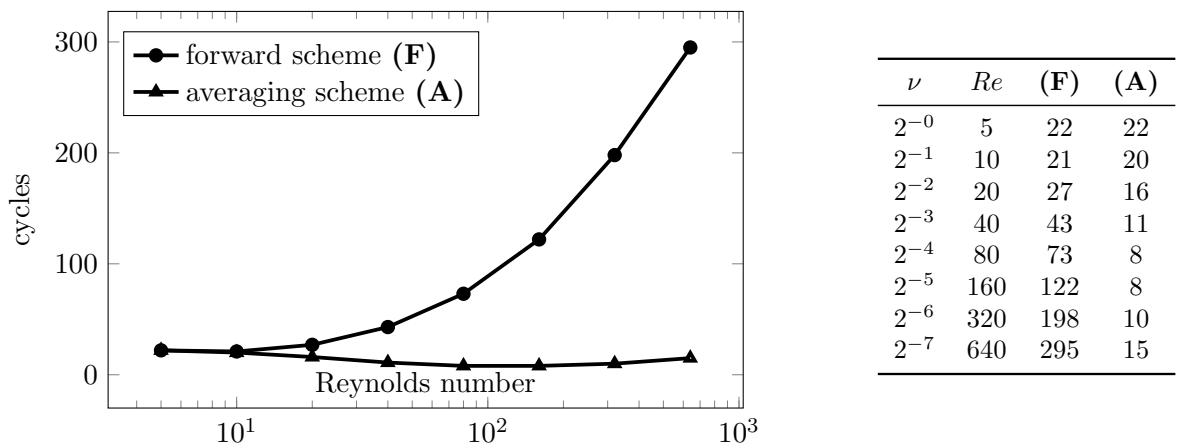

\section{Conclusion}\label{sec:conclusion}

We have presented an acceleration scheme for the computation of
periodic solutions to the Stokes and Navier-Stokes equations. In the
linear Stokes case we could show robust convergence of the scheme with
a rate that does not depend on the problem parameters or the
eigenvalues of the Stokes operator. Applied to the Navier-Stokes
equations numerical tests predict the same robustness and efficiency
of the algorithm. The only numerical overhead of the proposed
algorithm is the computation of one stationary averaged problem in
every cycle of the dynamic process. Depending on the problem data,
which strongly effects the decay rate of direct forward simulations, 
we get a significant speed-up.

The proof of convergence is based on the linearity and symmetry of the
Stokes operator. The extension to the nonlinear Navier-Stokes
equations will call for a different approach. 

An interesting but open extension of the averaging scheme will be the
application to problems with unknown periodicity. A possible
application is the laminar vortex shedding of the flow around an
obstacle. Here, predictions of the frequency are available with the
Strouhal number, the exact value however is depending on the specific
configuration, in particular on the geometry. To tackle such problems
we aim at the combination of the averaging scheme for obtaining
initial values $\vt_0$ with an optimization approach to identify the
period length $P$. The difficulty of such settings is the higher
Reynolds number that is usually involved. There is only a small
regime, where the solution is nonstationary with a clear
periodic-in-time solution. 

Finally, the proposed scheme allows to accelerate several problems
where the computation of cyclic states is an algorithmic sub-task such
as temporal multi-scale
problems~\cite{FreiRichterWick2016,FreiRichter2019,RichterMizerski2020}. 

\section*{Acknowledgement}

The author acknowledges support by the Federal Ministry of Education and
Research of Germany (project number 05M16NMA), as well as funding by
the Deutsche Forschungsgemeinschaft (DFG, German Research Foundation)
- 314838170, GRK 2297 MathCoRe and in project 411046898. 


\end{document}